\numberwithin{equation}{section}
\numberwithin{figure}{section}
\theoremstyle{plain}
\newtheorem{thm}{\protect\theoremname}
\theoremstyle{plain}
\newtheorem{lem}[thm]{\protect\lemmaname}
\providecommand{\lemmaname}{Lemma}
\providecommand{\theoremname}{Theorem}
\begin{document}
\title{Brouwer's conjecture holds asymptotically almost surely}
\author{Israel Rocha}
\address{The Czech Academy of Sciences, Institute of Computer Science, Pod
Vodárenskou v\v{e}ží 2, 182~07 Prague, Czech Republic. With institutional
support RVO:67985807.}
\thanks{\emph{Rocha} was supported by the Czech Science Foundation, grant
number GJ16-07822Y}
\email{israelrocha@gmail.com}
\begin{abstract}
We show that for a sequence of random graphs Brouwer's conjecture
holds true with probability tending to one as the number of vertices
tends to infinity. Surprisingly, it was found that a similar statement
holds true for weighted graphs with possible negative weights as well.
For graphs with a fixed number of vertices, the result implies that
there are constants $C>0$ and $n_{0}$ such that if $n\geq n_{0}$
then among all $2^{{n \choose 2}}$ graphs with $n$ vertices, at
least $\left(1-\exp\left(-Cn\right)\right)2^{{n \choose 2}}$ graphs
satisfy Brouwer's conjecture.
\end{abstract}

\keywords{Random Laplacian matrix; Brouwer's conjecture}
\subjclass[2000]{05C50;  15A52; 15A18}
\maketitle

\section{Introduction}

Brouwer's conjecture \citep{BrouwerHammers} states that any graph
$G=\left(E,V\right)$ with Laplacian matrix $L$ and eigenvalues $\lambda_{1}\geq\ldots\geq\lambda_{n}$
satisfies
\begin{equation}
\sum_{i=1}^{k}\lambda_{i}\leq\left|E\right|+{k+1 \choose 2},\label{eq:browersConjecture}
\end{equation}
for $k=1,\ldots,n$. There have been many partial progresses on this
conjecture using particular methods from matrix theory. This conjecture
seems to be a difficult problem and to this date it remains open.
In this paper we present an approach to this problem using methods
from random matrices and random graphs. Here we address the following
question: for how many graphs inequality (\ref{eq:browersConjecture})
holds? In contrast to what previous investigations have been focused
so far, instead of approaching the problem for graphs enjoying a prescribed
structure, we show that a great proportion of graphs satisfy the conjecture. 

In this paper it is shown that Brouwer's conjecture holds asymptotically
almost surely for random graphs under general conditions to be specified
later. That is to say that graphs that potentially do not satisfy
the inequality in Brouwer's conjecture are rare, in a precise measure
theoretic sense. That suggests a change of focus in the research that
has been done on this problem so far. Instead of searching for new
families of graphs for which the conjecture holds, one should attempt
to understand these rare cases for which the conjecture potentially
do not hold.

It comes as a surprise that this approach reveals that a similar statement
holds true for weighted graphs, and even with possible negative weights.
To state the result precisely some notation is needed. First, we denote
by $w_{uv}$ the weight of an edge $uv\in E$ and set $w_{uv}=0$
in case $uv\notin E$. Then, we define $e(G)=\sum_{uv\in E}w_{uv}$.
The Laplacian matrix of a weighted graph have the number $-w_{uv}$
in the off-diagonal entry $uv$ and $\sum_{j\neq u}w_{uj}$ in the
diagonal entry $uu$. As usual, an unweighted graph can be seen as
a $\{0,1\}$ weighted graph with its standard Laplacian matrix. Clearly,
in this case $\left|E\right|=e(G)$. We show that for a weighted graph
$G$ and for $k=1,\ldots,n$ we have 
\[
\sum_{i=1}^{k}\lambda_{i}\leq e(G)+{k+1 \choose 2},
\]
asymptotically almost surely.

A random weighted graph $G_{n}$ is a graph with $n$ vertices and
edge weights given by a random variable $\xi_{ij}^{(n)}$ for each
$ij\in[n]$. Throughout this paper a sequence of random weighted graphs
is under the general condition described as follow:

(Condition $1$) Let $\left\{ G_{1},G_{2},G_{3},\ldots\right\} $
be a sequence of random weighted graph with $n$ vertices and Laplacian
matrix given by
\[
L_{n}=\left[\begin{array}{cccc}
\sum_{j\neq1}\xi_{1j}^{(n)} & -\xi_{12}^{(n)} & \cdots & -\xi_{1n}^{(n)}\\
-\xi_{21}^{(n)} & \sum_{j\neq2}\xi_{2j}^{(n)} &  & -\xi_{2n}^{(n)}\\
\vdots &  & \ddots & \vdots\\
-\xi_{n1}^{(n)} & \cdots &  & \sum_{j\neq n}\xi_{nj}^{(n)}
\end{array}\right],
\]
where for $i<j$ we have that $\xi_{ij}^{(n)}$ are bounded random
variables on the same probability space and independent for each $n$
(not necessarily identically distributed) with $\xi_{ij}^{(n)}=\xi_{ji}^{(n)}$,
$\mathbb{E}\left[\xi_{ij}^{(n)}\right]=\mu_{n}$, $\text{Var}\left[\xi_{ij}^{(n)}\right]=\sigma_{n}^{2}$
, and
\[
\sup_{i,j,n}\mathbb{E}\left[\left|\left(\xi_{ij}^{(n)}-\mu_{n}\right)/\sigma_{n}\right|^{p}\right]<\infty
\]
for some $p>6$.

We have the setup to state the main result of this paper which implies
Brouwer's conjecture asymptotically almost surely.
\begin{thm}
\label{thm:mainThm}Assume $\left\{ G_{1},G_{2},G_{3},\ldots\right\} $
are independent random graphs as in Condition $1$. If $\mu_{n}\in(0,1-\gamma]$
for some $\gamma\in(0,1)$ and $\text{\ensuremath{\frac{\mu_{n}}{\sigma_{n}}\left(\frac{n}{\log n}\right)^{1/2}\rightarrow\infty}}$
as $n\rightarrow\infty$, then for $n$ large enough, we have that
\begin{equation}
\mathbb{P}\left[\sum_{i=1}^{k}\lambda_{i}\leq e(G_{n})+{k+1 \choose 2}\right]\geq1-\exp\left(-C\text{\ensuremath{\mu_{n}}}n\right),\label{eq:probBrouwersConj}
\end{equation}
for some constant $C>0$ that depends only on $\gamma$.
\end{thm}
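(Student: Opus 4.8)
The plan is to separate the linear algebra from the probability. Write $L_{n}=M_{n}+\mu_{n}L_{K_{n}}$, where $L_{K_{n}}=nI-J$ is the Laplacian of the complete graph (eigenvalue $n$ with multiplicity $n-1$, eigenvalue $0$ once) and $M_{n}$ is the Laplacian of the centred weights $\eta_{ij}:=\xi_{ij}^{(n)}-\mu_{n}$. Put $Y_{n}:=\sum_{i<j}\eta_{ij}$, so that $e(G_{n})=\mu_{n}\binom{n}{2}+Y_{n}$ and $\operatorname{tr}M_{n}=2Y_{n}$, and let $\|\cdot\|$ be the spectral norm. I would first prove the deterministic statement: there are $c_{1}(\gamma),c_{2}(\gamma)>0$ so that on the event $\bigl\{\|M_{n}\|\le c_{1}\sqrt{\mu_{n}}\,n\bigr\}\cap\bigl\{|Y_{n}|\le c_{2}\mu_{n}n^{2}\bigr\}$ one has $\sum_{i\le k}\lambda_{i}\le e(G_{n})+\binom{k+1}{2}$ for every $k$. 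Two uses of Ky Fan's inequality do this: directly, $\sum_{i\le k}\lambda_{i}(L_{n})\le k\|M_{n}\|+\mu_{n}kn$; and, using $\sum_{i\le k}\lambda_{i}(L_{n})=\operatorname{tr}L_{n}+\sum_{j\le n-k}\lambda_{j}(-L_{n})$ with $-L_{n}=-\mu_{n}L_{K_{n}}-M_{n}$, also $\sum_{i\le k}\lambda_{i}(L_{n})\le 2e(G_{n})-(n-k-1)\mu_{n}n+(n-k)\|M_{n}\|$ for $1\le k\le n-1$. Inserting $e(G_{n})=\mu_{n}\binom{n}{2}+Y_{n}$, the first bound makes the target follow from $k\|M_{n}\|\le g_{n}(k)+Y_{n}$ and the second from $(n-k)\|M_{n}\|\le g_{n}(k)-Y_{n}$, where
\[
g_{n}(k):=\binom{k+1}{2}+\mu_{n}\Bigl(\binom{n}{2}-kn\Bigr)=\tfrac12(k-\mu_{n}n)^{2}+\tfrac12(k-\mu_{n}n)+\tfrac12\mu_{n}(1-\mu_{n})n^{2};
\]
hence it suffices to have $\min(k,n-k)\,\|M_{n}\|+|Y_{n}|\le g_{n}(k)$ for all such $k$ (the remaining case $k=n$ amounts to $e(G_{n})\le\binom{n+1}{2}$, immediate from $\mu_{n}\le 1-\gamma$ and $|Y_{n}|\le c_{2}\mu_{n}n^{2}$).

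Two features of $g_{n}$ are then needed. First, $\min_{k}g_{n}(k)\ge\tfrac12\gamma\mu_{n}n^{2}-\tfrac18$, so for $c_{2}(\gamma)$ small enough $|Y_{n}|\le\tfrac12 g_{n}(k)$ for every $k$. Second — and this is the crux — an elementary optimization gives
\[
\min_{1\le k\le n-1}\frac{g_{n}(k)}{\min(k,n-k)}\ \ge\ c(\gamma)\,\sqrt{\mu_{n}}\,n\qquad(n\ \text{large}),
\]
with the extremal index $k\asymp\sqrt{\mu_{n}}\,n$ (where the conjecture is tightest for the weighted complete graph); taking $c_{1}(\gamma)=\tfrac12 c(\gamma)$ completes the deterministic step. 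It is essential to use the sharp factor $\sqrt{\mu_{n}}\,n$ rather than the crude $\mu_{n}n$ from $\min(k,n-k)\le n/2$: the surplus $\sqrt{\mu_{n}}$ is exactly what the concentration estimate below will cash in.

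For the probabilistic input, split $M_{n}=D_{n}-A_{n}$ into its diagonal part (the degree matrix of the $\eta_{ij}$) and its off-diagonal part. Standard estimates for symmetric random matrices with independent, bounded, mean-zero entries — the one place the moment assumption $p>6$ is used — give $\mathbb{E}\|A_{n}\|\lesssim\sigma_{n}\sqrt{n}$, while a Bernstein bound and a union bound over the $n$ rows give $\mathbb{E}\|D_{n}\|=\mathbb{E}\max_{i}\bigl|\sum_{j\ne i}\eta_{ij}\bigr|\lesssim\sigma_{n}\sqrt{n\log n}$; hence $\mathbb{E}\|M_{n}\|\lesssim\sigma_{n}\sqrt{n\log n}$, and the hypothesis $\tfrac{\mu_{n}}{\sigma_{n}}(n/\log n)^{1/2}\to\infty$ forces $\mathbb{E}\|M_{n}\|=o\bigl(\sqrt{\mu_{n}}\,n\bigr)$, so $\mathbb{E}\|M_{n}\|\le\tfrac12 c_{1}\sqrt{\mu_{n}}\,n$ for $n$ large. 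Now $\eta\mapsto\|M_{n}(\eta)\|$ is convex, and since a weighted Laplacian $\sum_{i<j}\delta_{ij}(e_{i}-e_{j})(e_{i}-e_{j})^{\top}$ has Frobenius norm (hence spectral norm) at most $\sqrt{2n}\,\|\delta\|_{2}$, this map is $\sqrt{2n}$-Lipschitz in the bounded coordinates $\eta_{ij}$. Talagrand's convex concentration inequality then yields
\[
\mathbb{P}\bigl[\|M_{n}\|>c_{1}\sqrt{\mu_{n}}\,n\bigr]\ \le\ \mathbb{P}\bigl[\|M_{n}\|-\mathbb{E}\|M_{n}\|>\tfrac12 c_{1}\sqrt{\mu_{n}}\,n\bigr]\ \le\ \exp(-C_{1}\mu_{n}n),
\]
the $\sqrt{n}$ in the Lipschitz constant being exactly compensated by the $\sqrt{\mu_{n}}$ from the previous paragraph. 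As $Y_{n}$ is a sum of $\binom{n}{2}$ independent bounded mean-zero terms, McDiarmid's inequality likewise gives $\mathbb{P}\bigl[|Y_{n}|>c_{2}\mu_{n}n^{2}\bigr]\le\exp(-C_{2}\mu_{n}n)$, and a union bound over the two bad events yields the theorem with $C=C(\gamma)=\tfrac12\min(C_{1},C_{2})$.

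The step I expect to be the genuine obstacle is precisely this calibration. A naive reduction controls $\|M_{n}\|$ only up to $\mu_{n}n$, against which the $\sqrt{n}$-Lipschitz Talagrand bound gives merely $\exp(-c\mu_{n}^{2}n)$, which is too weak; one is therefore forced to locate the true worst case of $g_{n}(k)/\min(k,n-k)$, namely $\asymp\sqrt{\mu_{n}}\,n$ at $k\asymp\sqrt{\mu_{n}}\,n$, and to run the concentration at that scale — making the two exponents meet at $\mu_{n}n$ is the heart of the argument. A secondary point is the a priori estimate $\mathbb{E}\|M_{n}\|\lesssim\sigma_{n}\sqrt{n\log n}$: the $\log n$ there (and hence the $\log n$ in the hypothesis) enters through the maximum of the $n$ degree fluctuations $\sum_{j\ne i}\eta_{ij}$.
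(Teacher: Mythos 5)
Your argument is correct in outline, but it takes a genuinely different route from the paper's. The paper never decomposes $L_n$: it uses the crude bound $\sum_{i\le k}\lambda_i\le k\,\lambda_{\max}(L_n)$, imports from Ding--Jiang (its Lemma \ref{lem:concentration}) that $\lambda_{\max}(L_n)\le(1+\epsilon)\mu_n n$ eventually, and then its Lemma \ref{lem:epsilondelta} is a single discriminant computation showing $k(1+\epsilon)\mu_n n<\mu_n(1-\delta)\binom{n}{2}+\binom{k+1}{2}$ for all $k$ --- i.e.\ it only ever needs your $g_n(k)/k$ at the crude scale $\mu_n n$, because it controls the full eigenvalue $\lambda_{\max}(L_n)$ rather than the centred norm $\|M_n\|$; Hoeffding on $e(G_n)$ then finishes. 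You instead isolate $M_n=L_n-\mu_n L_{K_n}$, apply Ky Fan, calibrate the worst case of $g_n(k)/\min(k,n-k)$ at $\sqrt{\mu_n}\,n$, and get the tail of $\|M_n\|$ from Talagrand. What your version buys is a genuinely quantitative treatment of the eigenvalue half: the paper converts the almost-sure limit $\lambda_{\max}/(n\mu_n)\to1$ into ``$\mathbb{P}[\lambda_{\max}\le(1+\epsilon)\mu_n n]=1$ for $n\ge m_0$'', which an a.s.\ limit does not literally yield at finite $n$, so its stated rate $\exp(-C\mu_n n)$ really rests only on the Hoeffding half; your Talagrand step supplies an honest $\exp(-C_1\mu_n n)$ tail, and your observation that the deterministic threshold $\sqrt{\mu_n}\,n$ is exactly what makes the two exponents meet at $\mu_n n$ is the real added content. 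The price is the a priori estimate $\mathbb{E}\|M_n\|\lesssim\sigma_n\sqrt{n\log n}$: the generic bound on $\mathbb{E}\|A_n\|$ carries an additive $K\sqrt{\log n}$ term with $K=\sup|\eta_{ij}|$, and removing it requires the truncation argument enabled by the $p>6$ moment condition --- essentially the same Ding--Jiang input the paper cites, so you have relocated rather than avoided that reference. Two minor remarks: your second (complementary) Ky Fan bound is dispensable, since $g_n(k)/k\gtrsim\sqrt{\mu_n}\,n$ already holds for all $1\le k\le n$; and your McDiarmid exponent $\mu_n^2 n^2$ (like the paper's Hoeffding exponent) dominates $\mu_n n$ only when $\mu_n n\gtrsim1$, a regime outside of which the claimed bound is vacuous anyway.
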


It is interesting to understand what this result is saying for the
set of graphs with fixed number of vertices. For such graphs this
result is implying a strong statement that quantifies the number of
graphs satisfying Brouwer's conjecture. In fact, the most simple instance
of Theorem \ref{thm:mainThm} is the most meaningful for the conjecture.
To see that consider the Erd\H{o}s--Rényi random graph with probability
$1/2$ and distribution $\mathbb{G}(n,1/2)$, i.e., a graph drawn
from this distribution has $n$ vertices where each pair of vertices
has an edge with probability $1/2$ independently at random. It is
a basic fact that any graph with $n$ vertices is equally likely in
the distribution $\mathbb{G}(n,1/2)$. Therefore, Theorem \ref{thm:mainThm}
implies that for $n\geq n_{0}$ among all $2^{{n \choose 2}}$ graphs
with $n$ vertices that exist, at least $\left(1-\exp-Cn\right)2^{{n \choose 2}}$
graphs satisfy Brouwer's conjecture.

The hypothesis $\text{\ensuremath{\frac{\mu_{n}}{\sigma_{n}}\left(\frac{n}{\log n}\right)^{1/2}\rightarrow\infty}}$
in Theorem \ref{thm:mainThm} ensures that $\mu_{n}/\sigma_{n}$ is
not approaching zero too fast. That is necessary because of the concentration
of the largest eigenvalue around $\mu_{n}n$ and because there is
a threshold phenomena happening here. When $\mu_{n}/\sigma_{n}$ goes
to zero fast, the largest eigenvalue concentrates around $\sigma_{n}\sqrt{n\log}n$.
In that case we still can apply the same method, but the difference
in concentration requires a different analysis. For this reason we
provide a separate theorem for this range, where $\mu_{n}/\sigma_{n}$
goes to zero in such a way that $\text{\ensuremath{\frac{\mu_{n}}{\sigma_{n}^{2}}\left(\frac{n}{\log n}\right)\rightarrow\infty}}$.
\begin{thm}
\label{thm:secondMainThm}Assume $\left\{ G_{1},G_{2},G_{3},\ldots\right\} $
are independent random graphs as in Condition $1$. If $\text{\ensuremath{\frac{\mu_{n}}{\sigma_{n}}\left(\frac{n}{\log n}\right)^{1/2}\rightarrow0}}$
and $\sigma_{n}^{2}\frac{\log n}{\mu_{n}n}\rightarrow0$ as $n\rightarrow\infty$,
then for $n$ large enough, we have that
\begin{equation}
\mathbb{P}\left[\sum_{i=1}^{k}\lambda_{i}\leq e(G_{n})+{k+1 \choose 2}\right]\geq1-\exp\left(-C\text{\ensuremath{\mu_{n}}}n\right),\label{eq:probBrouwersConj-1}
\end{equation}
for some constant $C>0$.
\end{thm}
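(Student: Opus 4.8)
The plan is to rerun the argument behind Theorem~\ref{thm:mainThm} in this regime, the only real change being that the top of the Laplacian spectrum now concentrates at scale $\sigma_n\sqrt{n\log n}$ instead of at $\mu_n n$, so the critical value of $k$ moves and the error terms in the basic eigenvalue estimate have a different order; it is condition~$2$, $\sigma_n^2\log n/(\mu_n n)\to0$, rather than the bound $\mu_n\le1-\gamma$ of Theorem~\ref{thm:mainThm}, that controls those errors. Write $L_n=\mathbb{E}L_n+F_n$ with $\mathbb{E}L_n=n\mu_n I-\mu_n J$ ($J$ the all-ones matrix), which has eigenvalue $n\mu_n$ of multiplicity $n-1$ and a simple eigenvalue $0$, and $F_n=\tilde D_n-\tilde A_n$, where $\tilde A_n=\big(\xi_{ij}^{(n)}-\mu_n\big)_{i,j}$ and $\tilde D_n$ is diagonal with entries $\sum_{j\neq u}\big(\xi_{uj}^{(n)}-\mu_n\big)$. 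Recall $\sum_{i=1}^n\lambda_i=\mathrm{tr}(L_n)=2e(G_n)$ and $\mathbb{E}\,e(G_n)=\binom n2\mu_n$, where $e(G_n)=\sum_{u<v}\xi_{uv}^{(n)}$; note also $\mu_n\to0$, because $\sigma_n$ is bounded and $\tfrac{\mu_n}{\sigma_n}(n/\log n)^{1/2}\to0$.

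By Weyl's inequality $\lambda_i\le\lambda_i(\mathbb{E}L_n)+\|F_n\|$, whence $\sum_{i=1}^k\lambda_i\le k\,n\mu_n+k\,\|F_n\|$ for $1\le k\le n-1$, while for $k=n$ the asserted inequality is just $e(G_n)\le\binom{n+1}2$. So it suffices to exhibit an event of probability at least $1-\exp(-C\mu_n n)$ on which $\|F_n\|\le R_n$, $(1-\delta)\binom n2\mu_n\le e(G_n)\le\binom{n+1}2$, and
\[
k\,n\mu_n+k\,R_n\ \le\ (1-\delta)\binom n2\mu_n+\binom{k+1}2\qquad(1\le k\le n-1)
\]
all hold, for an appropriate deterministic $R_n$ and a fixed small $\delta>0$. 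The probabilistic inputs are: (a) two-sided concentration of $e(G_n)$, a sum of $\binom n2$ bounded independent variables, by Bernstein's inequality (the upper bound $e(G_n)\le(1+\delta)\binom n2\mu_n\le\binom{n+1}2$ using $\mu_n\to0$); and (b) the bound $\|F_n\|\le\|\tilde D_n\|+\|\tilde A_n\|$, where $\|\tilde D_n\|=\max_u|\sum_{j\neq u}(\xi_{uj}^{(n)}-\mu_n)|$ is handled by Bernstein together with a union bound over the $n$ vertices, and $\|\tilde A_n\|$ — the operator norm of a symmetric random matrix with independent mean-zero, variance-$\sigma_n^2$ entries — is $O(\sigma_n\sqrt n)$, an estimate for which the moment hypothesis $p>6$ of Condition~$1$ is used. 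By condition~$2$ both $\sigma_n\sqrt{n\log n}$ and $\sigma_n\sqrt n$ are $o(n\sqrt{\mu_n})$, so one may take the deviation level for $\|F_n\|$ to be $R_n=\varepsilon\,n\sqrt{\mu_n}$ for a suitable constant $\varepsilon\in(0,1)$ without being wasteful, and with this choice each failure probability is $\exp(-\Theta(\mu_n n))$, again because condition~$2$ forces $n\mu_n/\sigma_n^2\to\infty$.

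It remains to verify the numerical inequality on the good event. Using $\binom{k+1}2\ge\tfrac12 k^2$, it is enough that the upward parabola $q(k)=\tfrac12 k^2-(n\mu_n+R_n)\,k+(1-\delta)\binom n2\mu_n$ be nonnegative for every real $k$, i.e.\ that its discriminant be nonpositive:
\[
\big(n\mu_n+\varepsilon\,n\sqrt{\mu_n}\big)^2\ \le\ (1-\delta)\,n(n-1)\mu_n .
\]
After dividing by $n^2\mu_n$ the left-hand side is $(\sqrt{\mu_n}+\varepsilon)^2\to\varepsilon^2$ while the right-hand side tends to $1-\delta$, so the inequality holds for all large $n$ provided $\varepsilon^2<1-\delta$. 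Then $q(k)\ge0$ for all $k$, which is precisely Brouwer's inequality for $1\le k\le n-1$; combined with the $k=n$ case this proves $(\ref{eq:probBrouwersConj-1})$ for $n$ large, with $C$ any positive constant below the implied constant of the tail bounds.

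The step I expect to be delicate is (b): obtaining $\|F_n\|\le\varepsilon\,n\sqrt{\mu_n}$ — through the union bound over vertices for $\|\tilde D_n\|$ and the spectral-norm bound for $\|\tilde A_n\|$ — with a failure probability genuinely of order $\exp(-\Theta(\mu_n n))$ rather than merely polynomially small. This is why the deviation level must be taken at the coarse scale $n\sqrt{\mu_n}$ and the constants calibrated against $\sigma_n$ and $\mu_n n$ via condition~$2$ (in the degenerate case $\mu_n n\to0$ the bound $(\ref{eq:probBrouwersConj-1})$ is trivial, so this can be assumed away). Once $\|F_n\|$ is pinned down at this scale with the correct probability, the remainder is the elementary nonnegativity of a quadratic.
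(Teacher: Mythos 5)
Your proposal is sound and reaches the conclusion by a genuinely different route from the paper's. The paper bounds $\sum_{i=1}^{k}\lambda_{i}\leq k\lambda_{\max}(L_{n})$ and controls $\lambda_{\max}$ by invoking the Ding--Jiang almost-sure limit $\limsup_{n}\lambda_{\max}(L_{n})/(\sigma_{n}\sqrt{n\log n})=2$ (Lemma \ref{lem:concentration}(2)), then closes with the deterministic inequality $k(2+\epsilon)\sigma_{n}\sqrt{n\log n}<\mu_{n}(1-\delta)\binom{n}{2}+\binom{k+1}{2}$ of Lemma \ref{lem:approximation2} and a Hoeffding bound on $e(G_{n})$; your discriminant computation for $q(k)$ is structurally the same quadratic argument as Lemma \ref{lem:approximation2}, only run at the coarser deviation scale $n\sqrt{\mu_{n}}$, which by condition $2$ dominates both $n\mu_{n}$ and $\sigma_{n}\sqrt{n\log n}$ and still keeps the discriminant negative, so nothing is lost there. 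Where you genuinely diverge is in replacing the citation of the almost-sure limit by the decomposition $L_{n}=\mathbb{E}L_{n}+F_{n}$, Weyl, and explicit concentration for $\|F_{n}\|$. This buys something real: the paper's passage from the almost-sure limsup to ``$\mathbb{P}[\lambda_{\max}(L_{n})\leq(2+\epsilon)\sigma_{n}\sqrt{n\log n}]=1$ for all $n\geq m_{0}$'' does not follow from almost-sure convergence and in any case supplies no rate, whereas your version produces finite-$n$ tail bounds at every step and therefore actually delivers the exponent $\exp(-C\mu_{n}n)$ claimed in (\ref{eq:probBrouwersConj-1}). The price is the step you yourself flag: you must establish $\|\tilde A_{n}\|\leq\tfrac{\varepsilon}{2}n\sqrt{\mu_{n}}$ with failure probability $\exp(-\Theta(\mu_{n}n))$, which is standard for uniformly bounded entries (concentration of the spectral norm about its mean at scale set by the entry bound, combined with a F\"uredi--Koml\'os or Bandeira--van Handel estimate $\mathbb{E}\|\tilde A_{n}\|=O(\sigma_{n}\sqrt{n}+\sqrt{\log n})$, whose second term is $o(n\sqrt{\mu_{n}})$ precisely when $n\mu_{n}$ is bounded away from zero --- the only case that matters, as you note). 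This needs to be written out or cited precisely, as does the absorption of the $\log n$ from the union bound over vertices into the Bernstein exponent, but both are routine under condition $2$; with those details supplied the argument is complete and is, if anything, a sharper and more self-contained proof than the one in the paper.
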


We get into the detailed proof in the next section, where we first
present the method from a general perspective. Our intention is to
give an insight on how such ideas can be used to provide bounds for
the partial sum of eigenvalues of random matrices from different ensembles. 

\section{Idea and proofs}

The main idea is straightforward and it consists in finding functions
$f$ and $g$ depending on $n$ satisfying:
\begin{itemize}
\item There exists $n_{0}$ such that $f(n)\leq g(n)$ for $n\geq n_{0}$
\item $\mathbb{P}\left[\sum_{i=1}^{k}\lambda_{i}\leq f\right]\rightarrow1$
as $n\rightarrow\infty$
\item $\mathbb{P}\left[g\leq e(G_{n})+{k+1 \choose 2}\right]\rightarrow1$
as $n\rightarrow\infty$
\end{itemize}
Once we figure out what $f$ and $g$ should be, by Bonferroni's inequality
we clearly have
\begin{align*}
\mathbb{P}\left[\sum_{i=1}^{k}\lambda_{i}\leq e(G_{n})+{k+1 \choose 2}\right] & \geq\mathbb{P}\left[\sum_{i=1}^{k}\lambda_{i}\leq f\right]+\mathbb{P}\left[g\leq e(G_{n})+{k+1 \choose 2}\right]-1\rightarrow1.
\end{align*}
And that finishes the proof. It is also clear that the technical challenge
here is to find such functions. To do that, we need some information
about the spectrum as $n$ increases. For instance, if the limiting
spectral distribution is known, then we can obtain a candidate for
$f$, which will be an approximation of the limit. For $g$ we can
use some Chernoff-type bound for the random variable that amounts
the total edge weight. 

We remark that there is nothing particular in this idea about the
Laplacian matrix. Apart from the fact that Brouwer's conjecture claims
what the correct bound should be, this method can be applied to any
matrix ensemble for which we know the limiting spectral distribution. 

Next, we proceed with the proof of the main results. Our analysis
relies on the following technical lemma which fully describes the
aforementioned relevant functions $f$ and $g$.
\begin{lem}
\label{lem:epsilondelta}If $\mu_{n}\in(0,1-\gamma]$ for some $\gamma\in(0,1)$,
then there are $\epsilon>0,$ $\text{\ensuremath{\delta>0}},$ and
$n_{0}$ such that for all $n\geq n_{0}$ we have 
\begin{equation}
k(1+\epsilon)\text{\ensuremath{\mu_{n}n}}<\text{\ensuremath{\mu_{n}}}\left(1-\delta\right){n \choose 2}+{k+1 \choose 2}\label{eq:bound1}
\end{equation}
for all $k\in\mathbb{R}$.
\end{lem}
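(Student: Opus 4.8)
The plan is to read \eqref{eq:bound1} as a quadratic inequality in the real variable $k$ and to show that, for a choice of $\epsilon,\delta$ depending only on $\gamma$, the corresponding parabola stays positive once $n$ is large. Writing $\binom{k+1}{2}=\tfrac12(k^{2}+k)$ and moving every term to one side, \eqref{eq:bound1} is equivalent to
\[
p(k):=\tfrac12 k^{2}+\Bigl(\tfrac12-(1+\epsilon)\mu_{n}n\Bigr)k+\mu_{n}(1-\delta)\binom{n}{2}>0 .
\]
The leading coefficient is positive, so $p>0$ holds on all of $\mathbb{R}$ precisely when the discriminant is negative, i.e.\ when
\[
\Bigl(\tfrac12-(1+\epsilon)\mu_{n}n\Bigr)^{2}<\mu_{n}(1-\delta)\,n(n-1).
\]
Expanding the square and rearranging, this is exactly
\[
\mu_{n}n^{2}\bigl[(1-\delta)-(1+\epsilon)^{2}\mu_{n}\bigr]+\mu_{n}n(\epsilon+\delta)>\tfrac14 ,
\]
so the lemma is reduced to producing $\epsilon,\delta>0$ --- and then $n_{0}$ --- making this last inequality hold for all large $n$.

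The decisive step is to keep the bracket bounded away from $0$ uniformly in $n$, using only the one-sided bound $\mu_{n}\le1-\gamma$. This bound gives $(1-\delta)-(1+\epsilon)^{2}\mu_{n}\ge(1-\delta)-(1+\epsilon)^{2}(1-\gamma)$, and the right-hand side tends to $\gamma>0$ as $\epsilon,\delta\downarrow0$; hence we may fix $\epsilon>0$ and $\delta>0$, functions of $\gamma$ alone, with $c:=(1-\delta)-(1+\epsilon)^{2}(1-\gamma)>0$. With this choice the left-hand side of the reduced inequality is at least $c\,\mu_{n}n^{2}$, which exceeds $\tfrac14$ for all $n$ past some $n_{0}$ because $\mu_{n}n^{2}\to\infty$ in the regimes in which the lemma is applied (indeed $\mu_{n}n\to\infty$ there). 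Equivalently, completing the square exhibits the minimum of $p$ at $k_{0}=(1+\epsilon)\mu_{n}n-\tfrac12$, and the same estimate shows $p>0$ on $[0,\infty)$ for $n$ large even with no lower bound on $\mu_{n}$, which already covers the values $k=1,\dots,n$ relevant to Brouwer's conjecture. Choosing $n_{0}$ accordingly completes the argument.

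I expect the only genuinely delicate point to be this choice of $\epsilon$ and $\delta$: the hypothesis controls $\mu_{n}$ only from above, and the whole sequence $(\mu_{n})$ must be accommodated at once, so one has to exploit $(1+\epsilon)^{2}\to1$ and $1-\delta\to1$ to absorb the multiplicative loss $(1+\epsilon)^{2}$ while still leaving a gap below $1$ that $(1-\gamma)$ cannot close. Once that gap $c$ is secured, the remainder is the routine observation that the constant $\tfrac14$ and the linear-in-$n$ term are negligible against the quadratic main term $c\,\mu_{n}n^{2}$.
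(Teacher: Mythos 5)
Your proof is correct and follows essentially the same route as the paper's: rewrite \eqref{eq:bound1} as a quadratic in $k$ with positive leading coefficient, force its discriminant to be negative by choosing $\epsilon,\delta$ depending only on $\gamma$ so that $(1-\delta)-(1+\epsilon)^{2}(1-\gamma)>0$, and let the $\mu_{n}n^{2}$ term dominate the constant $\tfrac14$ for large $n$; the paper simply makes the concrete choice $\delta=\gamma^{2}/2$, $(1+\epsilon)^{2}=1+\gamma$ where you argue by continuity. Your closing remark that negativity of the discriminant (hence positivity for all $k\in\mathbb{R}$ rather than just $k\ge 0$) tacitly requires $\mu_{n}n^{2}$ to eventually exceed a constant is a real subtlety that the paper's ``clearly dominated for $n$ large'' step also relies on without comment.
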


\begin{proof}
First, we define a polynomial in $k$ by $f(k):=\text{\ensuremath{\mu_{n}}}\left(1-\delta\right){n \choose 2}+{k+1 \choose 2}-k(1+\epsilon)\text{\ensuremath{\mu_{n}n}}$.
That has discriminant
\[
\Delta=n^{2}\mu_{n}(\delta-1+\mu_{n}(\epsilon+1)^{2})-n\mu_{n}(\delta+\epsilon)+1/4.
\]
It suffices to find $\epsilon,\text{\ensuremath{\delta}},$ and $n_{0}$
such that the discriminant is negative for all $n\geq n_{0}$. To
this end, we use the upper bound on $\mu_{n}$ to obtain
\[
\Delta\leq n^{2}\mu_{n}(\delta-1+(1-\gamma)(\epsilon+1)^{2})-n\mu_{n}(\delta+\epsilon)+1/4.
\]
Now fix $\delta=\gamma^{2}/2$ and $\epsilon>0$ such that $(\epsilon+1)^{2}=1+\gamma$.
That allow us to bound the discriminant by
\begin{align*}
\Delta & \leq n^{2}\mu_{n}(\delta-1+(1-\gamma)(1+\gamma))-n\mu_{n}(\delta+\epsilon)+1/4\\
 & =n^{2}\mu_{n}(\delta-\gamma^{2})+1/4\\
 & =-n^{2}\mu_{n}\gamma^{2}/2+1/4.
\end{align*}
Clearly, for $n$ large enough the last expression is dominated by
the term $-n^{2}\mu_{n}\gamma^{2}/2$ and therefore there exists a
$n_{0}$ such that for all $n\geq n_{0}$ we have that $f(k)>0$ for
all $k$. That proves the lemma.
\end{proof}
To estimate the largest eigenvalue $\lambda_{\text{max}}\left(L_{n}\right)$
we use Corollary 1.1 (b1) and (b2) from \citep{RandomLapl}. 
\begin{lem}
\label{lem:concentration}Assume $\left\{ G_{1},G_{2},G_{3},\ldots\right\} $
are independent random graphs as in Condition $1$. 
\end{lem}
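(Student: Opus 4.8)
The statement of Lemma~\ref{lem:concentration} is incomplete in the excerpt — it sets up the hypotheses but the conclusion (a concentration bound for $\lambda_{\max}(L_n)$) has been cut off. Based on the surrounding discussion, the intended content is an estimate of the form: under Condition~1, with $\mu_n \in (0,1-\gamma]$ and $\frac{\mu_n}{\sigma_n}\bigl(\frac{n}{\log n}\bigr)^{1/2}\to\infty$, one has
\[
\mathbb{P}\Bigl[\lambda_{\max}(L_n) \le (1+\epsilon)\,\mu_n n\Bigr] \ge 1 - \exp(-C'\mu_n n)
\]
for $n$ large, where $\epsilon$ is the constant produced by Lemma~\ref{lem:epsilondelta}. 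The plan is to derive this directly from Corollary~1.1~(b1)--(b2) of~\citep{RandomLapl}, which gives the location of the largest Laplacian eigenvalue of a random graph with i.i.d.-type entries. The main work is a translation step: that corollary is phrased for a normalized model with mean $\mu$ and variance $\sigma^2$ per entry, and states that $\lambda_{\max}(L_n)$ is, up to lower-order fluctuations, equal to $\mu n + $ (a Gaussian-type term of order $\sigma\sqrt{n\log n}$) plus an error. I would substitute the hypotheses of Condition~1 and of Theorem~\ref{thm:mainThm} into that statement and check that the fluctuation term $\sigma_n\sqrt{n\log n}$ is $o(\mu_n n)$ — which is exactly what the hypothesis $\frac{\mu_n}{\sigma_n}(\frac{n}{\log n})^{1/2}\to\infty$ buys us, since it rearranges to $\sigma_n\sqrt{n\log n} = o(\mu_n n)$.

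Concretely, the steps are as follows. First, decompose $L_n = L_n^{(\mu)} + \widetilde{L}_n$, where $L_n^{(\mu)}$ is the Laplacian of the complete graph with all weights equal to $\mu_n$ (so $L_n^{(\mu)} = \mu_n(nI - J)$, with top eigenvalue exactly $\mu_n n$ on the all-ones-orthogonal subspace and eigenvalue $0$ on the constant vector), and $\widetilde{L}_n$ is the Laplacian associated with the centered weights $\xi_{ij}^{(n)} - \mu_n$. Second, by Weyl's inequality, $\lambda_{\max}(L_n) \le \mu_n n + \lambda_{\max}(\widetilde{L}_n) \le \mu_n n + \|\widetilde{L}_n\|_{\mathrm{op}}$. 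Third, apply the cited Corollary~1.1 of~\citep{RandomLapl} to the centered, scaled array $(\xi_{ij}^{(n)} - \mu_n)/\sigma_n$, whose moments are controlled by the $\sup_{i,j,n}\mathbb{E}[|\cdot|^p]<\infty$ assumption with $p>6$; this yields $\|\widetilde{L}_n\|_{\mathrm{op}} \le C'' \sigma_n \sqrt{n\log n}$ with probability at least $1 - \exp(-C'\mu_n n)$ (or whatever explicit tail the corollary provides — one may need to absorb a polynomial factor, which is harmless against the exponential). Fourth, invoke the hypothesis to conclude $C''\sigma_n\sqrt{n\log n} \le \epsilon\mu_n n$ for $n$ large, giving $\lambda_{\max}(L_n) \le (1+\epsilon)\mu_n n$ on that event.

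The main obstacle is bookkeeping with the exact form of the tail bound in~\citep{RandomLapl}: Corollary~1.1 there is stated with a specific probability of failure (likely polynomial, e.g.\ $1 - o(1)$ or $1 - n^{-c}$, rather than exponentially small), so to reach the exponential rate $\exp(-C\mu_n n)$ in~\eqref{eq:probBrouwersConj} one must either (a) use the part (b2) large-deviation estimate if it is strong enough, or (b) supplement with a standard concentration argument — bounded differences / matrix Bernstein on the centered Laplacian, whose off-diagonal entries $-( \xi_{ij}^{(n)} - \mu_n)$ are bounded and independent — to upgrade the $\|\widetilde{L}_n\|_{\mathrm{op}}$ tail to exponential in $\mu_n n$. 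A secondary subtlety is that Condition~1 only requires independence \emph{for each $n$} (the array is not assumed identically distributed in $i,j$), so one must confirm that the cited corollary's hypotheses are met under mere uniform moment control rather than a common distribution; this is typically fine because the relevant universality results only use moment bounds, but it is the place to be careful. Once the concentration bound is in hand, combining it with Lemma~\ref{lem:epsilondelta} (which supplies the gap between $f(n) = (1+\epsilon)\mu_n n$ summed $k$ times and $g(n) = \mu_n(1-\delta)\binom{n}{2} + \binom{k+1}{2}$) and with a Chernoff bound for $e(G_n) = \sum_{i<j}\xi_{ij}^{(n)}$ concentrating near $\mu_n\binom{n}{2}$ gives Theorem~\ref{thm:mainThm} via the Bonferroni step already displayed in the excerpt.
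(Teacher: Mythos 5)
The paper offers no proof of this lemma at all: the two conclusions (which were truncated from the statement you were shown) are, first, that under $\frac{\mu_{n}}{\sigma_{n}}\left(\frac{n}{\log n}\right)^{1/2}\rightarrow\infty$ and $\mu_{n}>0$ one has $\mathbb{P}\left[\lim_{n}\lambda_{\max}(L_{n})/(n\mu_{n})=1\right]=1$, and, second, that under $\frac{\mu_{n}}{\sigma_{n}}\left(\frac{n}{\log n}\right)^{1/2}\rightarrow0$ one has $\mathbb{P}\left[\limsup_{n}\lambda_{\max}(L_{n})/(\sigma_{n}\sqrt{n\log n})=2\right]=1$. These are quoted verbatim as Corollary 1.1 (b1) and (b2) of \citep{RandomLapl} and are not reproved. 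So your reconstruction of the conclusion is wrong in kind: the paper's lemma is an almost-sure limit theorem, not a finite-$n$ tail bound of the form $1-\exp(-C'\mu_{n}n)$.

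That said, your instinct points at something real. A quantitative bound like the one you propose is what the proof of Theorem \ref{thm:mainThm} actually needs to reach the exponential rate in (\ref{eq:probBrouwersConj}); the paper instead passes from the almost-sure limit to the assertion that $\mathbb{P}\left[\lambda_{\max}(L_{n})\leq(1+\epsilon)\mu_{n}n\right]=1$ for all $n\geq m_{0}$, which does not follow from almost-sure convergence (the random index beyond which the inequality holds need not be uniformly bounded). Your decomposition $L_{n}=\mu_{n}(nI-J)+\widetilde{L}_{n}$ plus Weyl's inequality is the right skeleton for such a quantitative statement, but, as you yourself concede, the step that matters --- an exponential-in-$\mu_{n}n$ tail for $\Vert\widetilde{L}_{n}\Vert_{\mathrm{op}}$ --- is not supplied by the cited corollary, which is itself only an almost-sure statement proved under a $p>6$ moment hypothesis; a matrix Bernstein or bounded-differences argument would have to be carried out explicitly and checked to produce a rate compatible with $\exp(-C\mu_{n}n)$, and this is nontrivial when $\mu_{n}$ and $\sigma_{n}$ vary with $n$. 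As written, your proposal neither matches the paper's (non-)proof nor closes that gap.
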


\begin{enumerate}
\item If~ $\lim_{n\rightarrow\infty}\text{\ensuremath{\frac{\mu_{n}}{\sigma_{n}}\left(\frac{n}{\log n}\right)^{1/2}=\infty}}$
and $\mu_{n}>0$, then $\mathbb{P}\left[\lim_{n\rightarrow\infty}\frac{\lambda_{\text{max}}\left(L_{n}\right)}{n\mu_{n}}=1\right]=1.$
\item If~ $\lim_{n\rightarrow\infty}\text{\ensuremath{\frac{\mu_{n}}{\sigma_{n}}\left(\frac{n}{\log n}\right)^{1/2}=0}}$,
then $\mathbb{P}\left[\limsup_{n\rightarrow\infty}\frac{\lambda_{\text{max}}\left(L_{n}\right)}{\sigma_{n}\sqrt{n\log}n}=2\right]=1.$
\end{enumerate}
Now, we are ready to proceed with the main proof which was roughly
sketched in the beginning of this section.
\begin{proof}[Proof of Theorem \ref{thm:mainThm}]
From now on, we fix $\epsilon,\text{\ensuremath{\delta}},$ and $n_{0}$
given by Lemma \ref{lem:epsilondelta} to obtain that for all $n\geq n_{0}$
we have 
\begin{align}
\mathbb{P}\left[\sum_{i=1}^{k}\lambda_{i}\leq e(G_{n})+{k+1 \choose 2}\right] & \geq\mathbb{P}\left[\sum_{i=1}^{k}\lambda_{i}\leq k\left(1+\epsilon\right)\mu_{n}n\text{ and }\mu_{n}(1-\delta){n \choose 2}\leq e(G_{n})\right]\nonumber \\
 & \geq\mathbb{P}\left[\sum_{i=1}^{k}\lambda_{i}\leq k\left(1+\epsilon\right)\mu_{n}n\right]+\mathbb{P}\left[\mu_{n}(1-\delta){n \choose 2}\leq e(G_{n})\right]-1.\label{eq:ineq1}
\end{align}
By part (1) from Lemma \ref{lem:concentration} 
\[
\mathbb{P}\left[\lim_{n\rightarrow\infty}\frac{\lambda_{\text{max}}\left(L_{n}\right)}{n\mu_{n}}=1\right]=1.
\]
Thus, for $\epsilon$ given by Lemma \ref{lem:epsilondelta} we can
find $m_{0}$ such that $n\geq m_{0}$ implies $\mathbb{P}\left[\lambda_{\text{max}}\left(L_{n}\right)\leq\left(1+\epsilon\right)\mu_{n}n\right]=1.$
That implies
\begin{equation}
\mathbb{P}\left[\sum_{i=1}^{k}\lambda_{i}\leq k\left(1+\epsilon\right)\mu_{n}n\right]=1.\label{eq:sum1}
\end{equation}
Thus, inequality (\ref{eq:sum1}) together with (\ref{eq:ineq1})
gives us that for all $n\geq\min\left\{ m_{0},n_{0}\right\} $ we
have
\begin{equation}
\mathbb{P}\left[\sum_{i=1}^{k}\lambda_{i}\leq e(G_{n})+{k+1 \choose 2}\right]\geq\mathbb{P}\left[\mu_{n}(1-\delta){n \choose 2}\leq e(G_{n})\right].\label{eq:ineq2}
\end{equation}

To bound this probability, we use that the expected number of edges
in $G_{n}$ is $\mu_{n}{n \choose 2}$. We remark that $\left|\xi_{ij}^{(n)}\right|<C$
for all $\xi_{ij}^{(n)}$. Thus, Hoeffding\textasciiacute s inequality
implies that 
\[
\mathbb{P}\left[e(G_{n})\leq(1-\delta)\text{\ensuremath{\mu_{n}}}{n \choose 2}\right]\leq\exp-\frac{\delta^{2}\left(\text{\ensuremath{\mu_{n}}}{n \choose 2}\right)^{2}}{{n \choose 2}C^{2}}=\exp-\frac{\delta^{2}\text{\ensuremath{\mu_{n}^{2}}}{n \choose 2}}{C^{2}}
\]
for all $\delta\in(0,1)$. Equivalently, we have 
\[
\mathbb{P}\left[(1-\delta)\text{\ensuremath{\mu_{n}}}{n \choose 2}\leq e(G_{n})\right]\geq1-\exp-\frac{\delta^{2}\text{\ensuremath{\mu_{n}^{2}}}{n \choose 2}}{C^{2}}.
\]
Finally, this last inequality and inequality (\ref{eq:ineq2}) ensures
that for all $n\geq\min\left\{ m_{0},n_{0}\right\} $ we have

\[
\mathbb{P}\left[\sum_{i=1}^{k}\lambda_{i}\leq e(G_{n})+{k+1 \choose 2}\right]\geq1-\exp\left(-C_{0}\text{\ensuremath{\mu_{n}}}n\right).
\]
for some constant $C_{0}>0$. That finishes the proof.
\end{proof}
Now, the proof of Theorem \ref{thm:secondMainThm} is basically the
same, only the details in the analysis change. For that we need a
different version of Lemma \ref{lem:epsilondelta} given bellow. 
\begin{lem}
\label{lem:approximation2}If $\sigma_{n}^{2}\frac{\log n}{\mu_{n}n}\rightarrow0$
as $n\rightarrow\infty$, then there are $\epsilon>0,$ $\text{\ensuremath{\delta>0}},$
and $n_{0}$ such that for all $n\geq n_{0}$ we have
\begin{equation}
k(2+\epsilon)\text{\ensuremath{\sigma_{n}}}\sqrt{n\log n}<\text{\ensuremath{\mu_{n}}}\left(1-\delta\right){n \choose 2}+{k+1 \choose 2}\label{eq:bound2-1}
\end{equation}
for all $k$.
\end{lem}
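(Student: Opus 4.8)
The plan is to follow the proof of Lemma~\ref{lem:epsilondelta} almost verbatim; the only change is that the coefficient of $k$ on the left of (\ref{eq:bound2-1}) is now $(2+\epsilon)\sigma_n\sqrt{n\log n}$ instead of $(1+\epsilon)\mu_n n$. First I would introduce the quadratic polynomial in $k$
\[
f(k):=\mu_n(1-\delta){n \choose 2}+{k+1 \choose 2}-k(2+\epsilon)\sigma_n\sqrt{n\log n}=\frac12 k^2+\Bigl(\frac12-(2+\epsilon)\sigma_n\sqrt{n\log n}\Bigr)k+\mu_n(1-\delta){n \choose 2},
\]
and record that, since its leading coefficient $\frac12$ is positive, (\ref{eq:bound2-1}) holds for every real $k$ precisely when the discriminant of $f$ is negative. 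So the whole task is to make that discriminant negative for $n$ large.

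Next I would compute
\[
\Delta=\Bigl(\frac12-(2+\epsilon)\sigma_n\sqrt{n\log n}\Bigr)^2-2\mu_n(1-\delta){n \choose 2}=\frac14-(2+\epsilon)\sigma_n\sqrt{n\log n}+(2+\epsilon)^2\sigma_n^2 n\log n-\mu_n(1-\delta)n(n-1).
\]
Unlike in Lemma~\ref{lem:epsilondelta}, nothing ties $\mu_n$ to the free parameters, so I may just fix convenient values, say $\delta=\frac12$ and $\epsilon=1$ (any fixed $\epsilon>0$ and $\delta\in(0,1)$ works equally well). Discarding the negative middle term leaves $\Delta\le\frac14+(2+\epsilon)^2\sigma_n^2 n\log n-\mu_n(1-\delta)n(n-1)$, and the hypothesis $\sigma_n^2\frac{\log n}{\mu_n n}\to 0$ says exactly that $(2+\epsilon)^2\sigma_n^2 n\log n=o\bigl(\mu_n(1-\delta)n(n-1)\bigr)$; since the $\mu_n n^2$ term also dominates the constant $\frac14$ in the regime under consideration, the right-hand side is negative for all $n\ge n_0$ for some $n_0$. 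Hence $\Delta<0$, equivalently $f(k)>0$ for all $k$, whenever $n\ge n_0$, and the lemma follows.

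There is essentially no real obstacle: this is a routine discriminant estimate, and the hypothesis $\sigma_n^2\frac{\log n}{\mu_n n}\to 0$ is calibrated precisely so the $\sigma_n^2 n\log n$ contribution is swamped by the $\mu_n n^2$ term, playing the role that $\mu_n\le 1-\gamma$ played in Lemma~\ref{lem:epsilondelta}. The one point to be slightly careful about is that the $\delta$ produced here must lie strictly in $(0,1)$, because the same $\delta$ is reused in the Hoeffding step of the proof of Theorem~\ref{thm:secondMainThm}, where $1-\delta>0$ is needed; the explicit choice $\delta=\frac12$ settles this. If one prefers to leave $\epsilon,\delta$ unspecified, it suffices to carry along the single requirement $\frac{(2+\epsilon)^2}{1-\delta}\cdot\frac{\sigma_n^2\log n}{\mu_n n}<1$ that guarantees $\Delta<0$ for large $n$.
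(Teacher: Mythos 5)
Your argument is correct and is essentially the paper's own proof: both reduce (\ref{eq:bound2-1}) to showing that the discriminant of a quadratic in $k$ with leading coefficient $\tfrac12$ is negative, and both obtain $\Delta<0$ for large $n$ precisely from the hypothesis $\sigma_n^2\frac{\log n}{\mu_n n}\to 0$. The only difference is cosmetic: you keep the exact quadratic and discard the harmless $\tfrac14$ and cross terms at the end, whereas the paper first replaces $\binom{k+1}{2}$ by $\tfrac{k^2}{2}$ and $\binom{n}{2}$ by $Cn^2$ and then computes the discriminant of the simplified polynomial.
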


\begin{proof}
First, there is a constant $C>0$ such that for $n$ large, we have
\[
\text{\ensuremath{\mu_{n}}}\left(1-\delta\right){n \choose 2}+{k+1 \choose 2}-k(2+\epsilon)\text{\ensuremath{\sigma_{n}}}\sqrt{n\log n}>\text{\ensuremath{C\mu_{n}}}\left(1-\delta\right)n^{2}+\frac{k^{2}}{2}-k(2+\epsilon)\text{\ensuremath{\sigma_{n}}}\sqrt{n\log n}.
\]
We proceed to show that there are $\epsilon>0$ and $\text{\ensuremath{\delta>0}}$,
where $\text{\ensuremath{C\mu_{n}}}\left(1-\delta\right)n^{2}+\frac{k^{2}}{2}-k(2+\epsilon)\text{\ensuremath{\sigma_{n}}}\sqrt{n\log n}>0$
for $n$ large enough and that finishes the proof.

To show it we define a polynomial in $k$ by $f(k):=\frac{k^{2}}{2}-k(2+\epsilon)\text{\ensuremath{\sigma_{n}}}\sqrt{n\log n}+\text{\ensuremath{C\mu_{n}}}\left(1-\delta\right)n^{2}$.
This polynomial has discriminant
\begin{align*}
\Delta & =\left(2+\epsilon\right)^{2}\sigma_{n}^{2}n\log n-2C\mu_{n}\left(1-\delta\right)n^{2}.
\end{align*}
It suffices to find $\epsilon,\text{\ensuremath{\delta}},$ and $n_{0}$
such that the discriminant is negative for all $n\geq n_{0}$. In
fact, it is enough to choose any $\epsilon>0$ and $\text{\ensuremath{\delta\in(0,1)}}$.
Notice that $\Delta<0$ is equivalent to
\[
\sigma_{n}^{2}\frac{\log n}{\mu_{n}n}<\frac{2C\left(1-\delta\right)}{\left(2+\epsilon\right)^{2}},
\]
which is true for $n$ large enough, as required.
\end{proof}
\begin{proof}[Proof of Theorem \ref{thm:secondMainThm}]
First, we fix $\epsilon,\text{\ensuremath{\delta}},$ and $n_{0}$
given by Lemma \ref{lem:approximation2} to obtain that for all $n\geq n_{0}$
we have

\begin{align}
\mathbb{P}\left[\sum_{i=1}^{k}\lambda_{i}\leq e(G_{n})+{k+1 \choose 2}\right] & \geq\mathbb{P}\left[\sum_{i=1}^{k}\lambda_{i}\leq k(2+\epsilon)\text{\ensuremath{\sigma_{n}}}\sqrt{n\log n}\text{ and }\mu_{n}(1-\delta){n \choose 2}\leq e(G_{n})\right]\nonumber \\
 & \geq\mathbb{P}\left[\sum_{i=1}^{k}\lambda_{i}\leq k(2+\epsilon)\text{\ensuremath{\sigma_{n}}}\sqrt{n\log n}\right]+\mathbb{P}\left[\mu_{n}(1-\delta){n \choose 2}\leq e(G_{n})\right]-1.\label{eq:ineq2-1-1}
\end{align}
Part (2) from Lemma \ref{lem:concentration} provides us with
\[
\mathbb{P}\left[\limsup_{n\rightarrow\infty}\frac{\lambda_{\text{max}}\left(L_{n}\right)}{\sigma_{n}\sqrt{n\log}n}=2\right]=1.
\]
Again, for $\epsilon$ given by Lemma \ref{lem:approximation2} we
can find $m_{0}$ such that $n\geq m_{0}$ implies 
\[
\mathbb{P}\left[\lambda_{\text{max}}\left(L_{n}\right)\leq\left(2+\epsilon\right)\sigma_{n}\sqrt{n\log}n\right]=1.
\]
 That gives us
\begin{equation}
\mathbb{P}\left[\sum_{i=1}^{k}\lambda_{i}\leq k\left(2+\epsilon\right)\sigma_{n}\sqrt{n\log}n\right]=1.\label{eq:sum2-1}
\end{equation}
Thus, inequality (\ref{eq:ineq2-1-1}) together with (\ref{eq:sum2-1})
gives us that for all $n\geq\min\left\{ m_{0},n_{0}\right\} $ we
have
\begin{equation}
\mathbb{P}\left[\sum_{i=1}^{k}\lambda_{i}\leq e(G_{n})+{k+1 \choose 2}\right]\geq\mathbb{P}\left[\mu_{n}(1-\delta){n \choose 2}\leq e(G_{n})\right].\label{eq:ineq2-2}
\end{equation}
The rest of the proof is the same as in Theorem \ref{thm:mainThm}
and that finishes the proof.
\end{proof}

\end{document}